\documentclass[preprint]{article}
 \listfiles 
\pdfoutput=1

\newcommand{\C}{\mathbb{C}}

\renewcommand{\span}{\mbox{span}}

\newcommand{\mb}[1]{\mathbf{#1}}
\newcommand{\mc}[1]{\mathcal{#1 }  }

\usepackage{array}
\usepackage{amssymb,amsfonts,amsmath,latexsym,dcolumn}
\usepackage{arydshln}
\usepackage{enumerate}
\usepackage{stmaryrd}
\usepackage{times,mathptm}
\usepackage{graphicx}
\usepackage{pifont}
\usepackage{algorithmic}
\usepackage{algorithm}
\newcommand{\be}{\begin{equation}}
\newcommand{\ee}{\end{equation}}
\newcommand{\ba}{\begin{array}}
\newcommand{\ea}{\end{array}}

\newcommand{\comment}[1]{}

\newcommand{\B}[1]{\mathbf{#1}}

\newenvironment{code1}{%
                           \mathcode`\:="603A  
                           \def\colon{\mathchar"303A}
                           \par
                           \upshape
                           \begin{list} 
                              {} {\leftmargin = 0.0cm}
                           \item[]
                           \begin{tabbing}
                              \hspace*{.3in} \= \hspace*{.3in} \=
                              \hspace*{.3in} \= \hspace*{.3in} \=
                                                                                                                                                                                                     \hspace*{.3in} \= \hspace*{.3in} \= \kill
                          }{\end{tabbing}\end{list}}



\newcounter{master}
\newcounter{enumcnt}

\usepackage{amssymb,amsfonts,amsmath,latexsym,dcolumn,amsthm}
\usepackage{arydshln}
\usepackage{enumerate}
\usepackage{stmaryrd}
\usepackage{times}
\usepackage{graphicx}
\usepackage{pifont}
\usepackage{algorithmic}
\usepackage{algorithm}

\newtheorem{definition}{Definition}
\newtheorem{thm}{Theorem}
\newtheorem{lem}[thm]{Lemma}
\usepackage{a4wide}


\begin{document}
\title{Block Tridiagonal Reduction of Perturbed Normal and 
Rank Structured  Matrices\thanks{This work was partially supported by GNCS-INDAM,  grant ''Equazioni e funzioni di Matrici''}}

\author{Roberto Bevilacqua,  Gianna M. Del Corso and Luca Gemignani\thanks{Universit\`a di Pisa, Dipartimento di Informatica, Largo Pontecorvo, 3, 56127 Pisa, Italy, email: \{bevilacq, delcorso, l.gemignani\}@di.unipi.it}}

\date{May 2, 2013}
\maketitle

\begin{abstract}
It is well known  that  if a matrix $A\in\mathbb C^{n\times n}$ solves the matrix equation  
 $f(A,A^H)=0$,  where $f(x, y)$ is a linear bivariate polynomial,   then $A$ is normal;  
$A$ and $A^H$ can be simultaneously reduced in a finite number of operations 
 to tridiagonal form by a unitary congruence and, moreover, the spectrum of $A$ is 
located on a straight line in the complex plane. 
 In  this paper we  
present  some generalizations of these properties  for almost normal matrices  
which satisfy certain quadratic 
matrix equations arising in the study of structured eigenvalue problems for perturbed  
Hermitian  and unitary matrices.  

\end{abstract}

\noindent{\small{\bf Keywords}
Block tridiagonal reduction, rank structured matrix, block Lanczos algorithm\\
{\bf MSC}
65F15
}

\section{Introduction}

Normal matrices play an important theoretical role in the field of numerical linear algebra.  
A square complex matrix is called normal if
$$
A^HA-AA^H=0,
$$
where $A^H$ is the conjugate transpose of $A$.  Polyanalytic polynomials are functions of the form   
$p(z)=\sum_{j=0}^k h_{k-j}(z)\bar z^j$, where  $h_j(z)$, $0\leq j\leq k$, 
 are  complex polynomials of degree less than or equal to 
$j$.  A polyanalytic  polynomial  of minimal total degree  that annihilates  $A$, i.e., such that $p(A)=0$, 
is called a minimal polyanalytic polynomial of $A$ \cite{Hu03}.
Over the years many equivalent conditions have been found~\cite{GJSW87,EI98}, 
and it has been discovered that the class of normal matrices can be partitioned in accordance with 
 a parameter  $s\in \mathbb N$,  $s\le n-1$,  where $s$ is the minimal degree of a particular polyanalytic polynomial  
 $p_s(z)=\bar z -n_s(z)$  such that $p_s(A)=A^H -n_s(A)=0$, and $n_s(z)$ is a polynomial of degree $s$.

For a normal matrix the assumption of being banded imposes strong constraints on 
the localization of the spectrum and the degree of minimal polyanalytic polynomials. 
It is well known that the minimal polyanalytic polynomial of an 
 irreducible normal tridiagonal matrix has degree one and, moreover, 
the spectrum of the matrix is located on a straight 
line in the complex plane \cite{FM84,HU94}.  Generalizations of these properties to normal matrices 
with symmetric band structure are provided in \cite{IK97}. Nonsymmetric structures are 
considered in the papers 
\cite{LS05,FLT09} where it is shown that the customary Hessenberg reduction procedure 
applied to a normal matrix always returns  a  banded matrix with upper bandwidth at most $k$ if and only if 
$s\leq k$.  A way to arrive at the Hessenberg form is using the Arnoldi method which amounts to construct a 
 sequence of nested  Krylov subspaces.  A symmetric variation of the Arnoldi method   named generalized Lanczos 
procedure  is devised in \cite{EI97} and applied in \cite{EI97,Hu03} and \cite{IK09} for the block tridiagonal 
reduction of normal and perturbed normal matrices, respectively. The reduction is rational --up to square 
root calculations-- and finite but not computationally appealing since it essentially reduces to the 
orthonormalization of the sequence of generalized powers $A^{j} {A^k}^H \B v$, $j+k=m$, $m\geq 0$.

In~\cite{BDC13} the class of {\it almost normal matrices} is introduced, that is the class of matrices 
for which $[A,A^H]=A^HA-AA^H=CA-AC$ for a low rank matrix $C$.
In the framework of operator theory conditions upon the commutator $[A, A^H]$ are widely used in the study of structural 
properties of hypernormal operators \cite{PU99}. 
 Our interest in the class of almost normal matrices 
stems from the analysis of 
fast  eigenvalue algorithms  for rank--structured matrices.  
If $A$ is a rank--one  correction of a Hermitian or unitary matrix than 
$A$ satisfies $[A,A^H]=CA-AC$ for a matrix $C$ of rank at most 2. Furthermore, 
this matrix $C$ is involved in the description of the rank structure of the matrices 
generated starting from $A$ under the QR process \cite{BDG04,EGG08,VDC10}. Thus the  problem of 
simultaneously  reducing both  $A$ and $C$ to symmetric band structure  is theoretically interesting 
but it also  might be  beneficial  
for the design of fast effective eigenvalue algorithms for these matrices.  Furthermore, 
condensed representations expressed in terms of block matrices \cite{EG05} or the product of simpler 
matrices \cite{DB07,VDC11}
tends to become inefficient as the length of the perturbation increases \cite{KDB13}.  
The exploitation of 
condensed representations in banded form can circumvent these difficulties.

In~\cite{BDC13} it is shown that 
we can always find an  almost normal block tridiagonal matrix  with blocks of size $2$ 
which fulfills the commutator  equation for  a certain  $C$ with  $\mbox{rank}(C)=1$.
Although an algorithmic construction of a block tridiagonal solution is given,   
no  efficient computational method  is described  in that  paper for  the 
block tridiagonal reduction of a prescribed solution of the equation. 
In this contribution,  we first propose
an algorithm  based on the application of the block Lanczos method to the matrix $A+A^H$   
starting from  a suitable set of vectors associated with the range of the commutator 
$[A,A^H]$ for the block tridiagonal reduction of an eligible solution of the commutator equation. 
Then we generalize the approach to the case where $\mbox{rank}(C)=2$ that  is relevant 
for the applications to rank--structured eigenvalue problems. We also report experimental evidence that 
in these problems 
the proposed reduction effectively impacts the tracking of the rank structures under the customary QR process. 
Finally,   we show that  similar results still partially hold when $A$ is a rank--one modification of particular 
normal matrices whose eigenvalues lie on a real algebraic curve of degree 2. In the latter case the matrix 
$C$ of rank at most 2 could not exist and, therefore, the analysis of this 
configuration is useful to put in evidence the consequences of such a missing.

\section{Simultaneous block tridiagonalization}

In this section we discuss the reduction to block tridiagonal form of almost normal matrices. 

\noindent \begin{definition}
Let $A$ be an $n\times n$ matrix. If there exists a rank-$k$ matrix $C$ such that 
$$
[A,A^H]=A^HA-AA^H=CA-AC,
$$
we say that $A$ is a {\it $k$-almost normal matrix}.
\end{definition}

Denote by   $\Delta(A)\colon=[A,A^H]=A^HA-AA^H$ the commutator of $A$ 
 and  by ${\cal S}$ the range of $\Delta(A)$.  It is clear that if,  for a given $C$, any solution of 
the nonlinear matrix equation 
\begin{equation}\label{commut}
[X,X^H]=X^HX-XX^H=CX-XC, \quad C,X\in\mathbb C^n, 
\end{equation}
exists, then it is not unique.  Indeed, if $A$ is 
 an almost-normal matrix such that $\Delta(A)=CA-AC$ then $B=A+\gamma I$, with a complex constant $\gamma$, 
 is almost normal as well and $\Delta(B)=\Delta(A)=CB-BC$. 
In~\cite{BDC13} the structure of almost normal matrices with rank--one perturbation is studied by 
showing  that a  block-tridiagonal matrix  with $2\times 2$ blocks can be determined to satisfy  
\eqref{commut} for a certain $C$ of rank one.   Here we take a different look at the problem by 
asking whether a  solution of \eqref{commut} for a given $C$   can be reduced to block tridiagonal form. 

The block  Lanczos algorithm is a technique for reducing a Hermitian matrix  $H\in \mathbb C^{N\times n}$ 
to block tridiagonal form. 
There are many variants of the basic block Lanczos procedure.  The method stated below is in the spirit of the block 
Lanczos algorithm described in \cite{GUT08}. 

\medskip
{\small{
\framebox{\parbox{8.0cm}{
\begin{code1}
{\bf Procedure} {\bf Block\_Lanczos }\\
{\bf Input}: $H$, $Z\in \mathbb C^{n\times \ell}$ nonzero, $\ell\leq n$;\\
\ $[Q,\Sigma,V]={\bf svd}(Z)$; $s={\bf rank}(\Sigma)$; \\
\ $U(:, 1:s)=Q(:,1:s)$; $s_0=1, s_1=s$; \\
\ {\bf while} $s_1<n$\\
\quad  $W=A_H \cdot U(:, 1:s)$; $T(s_0:s_1, s_0:s_1)= (U(:, 1:s))^H\cdot W$; \\
\quad   {\bf if} $s_0=1$\\
\quad \quad $W=W-U(:, s_0:s_1)\cdot T(s_0:s_1, s_0:s_1)$;\\
\quad  {\bf else}\\
\quad \quad $W=W-U(:,s_0:s_1)\cdot T(s_0:s_1, s_0:s_1)$; \\
\quad \quad $W=W-U(:,\hat s_{0}:\hat s_{1})\cdot T(\hat s_{0}:\hat s_{1}, s_0:s_1);$\\
\quad  {\bf end}\\
\quad  $[Q,\Sigma,V]={\bf svd}(W)$; $s_{\rm new}={\bf rank}(\Sigma)$; \\
\quad {\bf if} $s_{\rm new}=0$\\
\quad \quad {\bf disp}('{\rm premature} {\rm stop}'); {\bf return}; \\
\quad {\bf else}\\
\quad  \quad $\Sigma=\Sigma(1:s_{\rm new}, 1:s_{\rm new})\cdot (V(:,1:s))^H$; \\
\quad \quad  $\hat s_{0}=s_0, \hat s_{1}=s_1, s_0=s_1+1, s_1=s_1+s_{\rm new}$; \\
\quad \quad $U(:, s_0:s_1)=Q(:,1:s_{\rm new})$, $T(s_0:s_1, \hat s_{0}:\hat s_{1})=\Sigma(1:s_{\rm new}, 1:s)$;\\
\quad \quad $T(\hat s_{0}:\hat s_{1}, s_0:s_1)=(T(s_0:s_1, \hat s_{0}:\hat s_{1}))^H$, $s=s_{\rm new}$;\\
\quad {\bf end}\\
\ {\bf end}\\
\ $T(s_0:s_1,s_0:s_1)=(U(:,s_0:s_1))^H\cdot A_H\cdot U(:,s_0:s_1)$;
\end{code1}}}}}
\medskip

The procedure, when terminates without 
a premature stop, produces a block-tridiagonal matrix and a unitary matrix $U$ such that
$$
U^H\,H\, U=T=\left[\begin{array}{ccccc}
A_1 &B_1^H & &\\
B_1 &A_2& \ddots &\\
 &\ddots &\ddots&B_{p-1}^H\\
 & & B_{p-1} &A_p\end{array}
 \right],
$$
where $A_k\in\C^{i_k\times i_k}$, $B_k\in\C^{i_{k+1}\times i_k}$, and $\ell \ge i_k\ge i_{k+1}$, $i_1+i_2+\cdots+i_p=n$.
In fact, the size of the blocks  can possibly  shrink when the rank of the matrices $W$ is less than $\ell$. 

Let $Z\in\C^{n\times \ell}$, and denote by ${\mc K}_{\ j}(H, Z)$ the block Krylov subspace 
generated by the column vectors in $Z$, that is
the space spanned by the columns of the matrices $Z, HZ, H^2Z, \ldots, H^{j-1}Z$. 
 It is well known that the classical  Lanczos process builds an 
orthonormal  basis for the Krylov subspace ${\mc K}_{\ n-1}(H, {\bf z})$, for ${\bf z}=\alpha\, U(:, 1)$. 
Similarly, when the 
block Lanczos process does not break down,  
$\span\{ U(:, 1), U(:, 2)\, \ldots, U(:, n)\}={\mc K}_{\ j}(H, Z)$ for $j$ 
such that $\dim({\mc K}_{\ j}(H, Z))=n$. When the block-Lanczos procedure terminates 
before completion it means that  ${\mc K}_{\ j}(H, Z)$ is an invariant subspace and  a 
null matrix $W$  has been found in the above procedure.  In this case the procedure  has 
to be restarted and the 
final matrix $T$ is block diagonal. 
As an example, we can consider the matrix $H=U+U^H$,  where $U$ is 
 the Fourier matrix $U\colon =\mathcal F_n=\frac{1}{\sqrt{n}}\Omega_n$ of order $n=2m$. 
Due to  the relation $\Omega_n^2=n \Pi$, where $\Pi$ is a suitable symmetric permutation matrix, it is 
found that for any starting vector $\B z$ the {\bf Block\_Lanczos} procedure applied with 
$Z=[\B z|U\B z]$  breaks down within the first three 
steps.  In this case  the reduction scheme has to be restarted and 
the initial matrix can be converted into the direct sum of diagonal  blocks.

\subsection{Case of rank one}
In this section we consider the case where the matrix  $A\in \mathbb C^{n\times n}$ 
solves \eqref{commut} for a prescribed nonzero matrix $C$  of rank one, that is $C={\mb u}{\mb v}^H$, 
$\mb u, \mb v\neq \mb 0$. 
We show  that  $A$ can be unitarily converted to a block tridiagonal form with 
blocks of size at most 2 by applying the block-Lanczos procedure 
starting from a basis of ${\cal S}$ the column space of $\Delta(A)$.

Let us introduce  the Hermitian and antihermitian  part of $A$ 
denoted as \[
A_H\colon =\frac{A+A^H}{2}, \quad A_{AH}\colon =\frac{A-A^H}{2}.
\] 
Observe that
\[
\Delta(A)=A^HA-AA^H= 2(A_HA_{AH}-A_{AH}A_H).
\] 
In the next theorems we prove that the Krylov subspace of $A_H$ obtained starting from a  
basis of ${\cal S}$, the column space of $\Delta(A)$, coincides with 
the Krylov space of $A_{AH}$, and hence with that of $A$.  We first need some technical lemmas.

\begin{lem}\label{lem1}
Let $A$ be a 1-almost normal matrix, and let $C={\mb u}{\mb v}^H$ be a rank--one matrix such 
that $\Delta(A)=CA-AC$ is nonzero. Then $\Delta(A)$  has rank two and  
$({\mb u}, A{\mb u})$  and $({\mb v}, A^H{\mb v})$ are two  bases  of  ${\cal S}$. 
Moreover, if  ${\mb u}$ and ${\mb v}$ are linearly independent then $({\mb u}, {\mb v})$ is a 
basis for ${\cal S}$ as well.
\end{lem}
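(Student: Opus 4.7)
The plan is to compute $\Delta(A)$ explicitly using the rank-one factorization of $C$, which will sandwich the range $\mathcal{S}$ between two candidate two-dimensional subspaces, and then to pin down $\dim\mathcal{S}=2$ via a trace argument.

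Substituting $C=\mathbf{u}\mathbf{v}^H$ into $\Delta(A)=CA-AC$ gives a difference of two rank-one matrices,
\[
\Delta(A) \;=\; \mathbf{u}(A^H\mathbf{v})^H - (A\mathbf{u})\mathbf{v}^H,
\]
so $\mathcal{S}\subseteq\mathrm{span}\{\mathbf{u},A\mathbf{u}\}$. Since $\Delta(A)^H=AA^H-A^HA=-\Delta(A)$, the commutator is anti-Hermitian, and therefore its column space equals that of its adjoint: $\mathcal{S}=\mathrm{range}(\Delta(A)^H)\subseteq\mathrm{span}\{\mathbf{v},A^H\mathbf{v}\}$. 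Both candidate spans have dimension at most two, so $\mathrm{rank}(\Delta(A))\le 2$.

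The hard part is ruling out rank one. A nonzero rank-one anti-Hermitian matrix $B$ must have the form $B=\sigma\,\mathbf{a}\mathbf{a}^H$ with purely imaginary $\sigma\neq 0$ and $\mathbf{a}\neq 0$: writing $B=\mathbf{a}\mathbf{b}^H$ and comparing the column spaces of $B$ and $-B^H$ forces $\mathbf{b}\parallel\mathbf{a}$, and then $B^H=-B$ pins down $\sigma$. Such a matrix has trace $\sigma\|\mathbf{a}\|^2\neq 0$, whereas cyclicity of the trace gives $\mathrm{tr}(\Delta(A))=\mathrm{tr}(A^HA)-\mathrm{tr}(AA^H)=0$. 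Hence the rank-one case is incompatible with $\Delta(A)\neq 0$, and we conclude $\dim\mathcal{S}=2$.

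With $\dim\mathcal{S}=2$ in hand, the two inclusions above become equalities: $(\mathbf{u},A\mathbf{u})$ and $(\mathbf{v},A^H\mathbf{v})$ are each bases of $\mathcal{S}$, and linear independence within each pair is an automatic consequence rather than something to check separately. The final claim is then immediate, since when $\mathbf{u}$ and $\mathbf{v}$ are linearly independent they are two independent vectors lying in the two-dimensional space $\mathcal{S}$ (both already identified as members of $\mathcal{S}$ by the two basis statements just established) and therefore form a basis of it.
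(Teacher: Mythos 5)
Your proof follows essentially the same route as the paper's: expand $\Delta(A)=\mathbf{u}(A^H\mathbf{v})^H-(A\mathbf{u})\mathbf{v}^H$ to trap $\mathcal{S}$ inside $\mathrm{span}\{\mathbf{u},A\mathbf{u}\}$, use the symmetry of $\Delta(A)$ to obtain the second inclusion $\mathcal{S}\subseteq\mathrm{span}\{\mathbf{v},A^H\mathbf{v}\}$, and settle the last claim by a dimension count. You actually do a little more than the paper: the paper simply asserts that $\Delta(A)$ nonzero and symmetric ``therefore'' has rank two, whereas your trace argument (a rank-one self-adjoint or skew-adjoint matrix is $\sigma\mathbf{a}\mathbf{a}^H$ and has trace $\sigma\|\mathbf{a}\|^2\neq 0$, while a commutator is traceless) supplies the justification that is left implicit there; that step is worth keeping.

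There is, however, one factual error to fix: $\Delta(A)=A^HA-AA^H$ is Hermitian, not anti-Hermitian. Since $(A^HA)^H=A^HA$ and $(AA^H)^H=AA^H$, one has $\Delta(A)^H=\Delta(A)$; your computation $\Delta(A)^H=AA^H-A^HA$ swaps the factors incorrectly (the paper itself states the Hermitian symmetry and uses it in exactly the two places you do). The slip is harmless to the structure of your argument, because you only use (i) $\mathrm{range}(\Delta(A)^H)=\mathrm{range}(\Delta(A))$, which is immediate for a Hermitian matrix, and (ii) that a nonzero rank-one (anti-)Hermitian matrix $\sigma\mathbf{a}\mathbf{a}^H$ has nonzero trace, which holds with $\sigma$ real just as well as with $\sigma$ purely imaginary. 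Replace ``anti-Hermitian'' by ``Hermitian'' and ``purely imaginary $\sigma$'' by ``real nonzero $\sigma$'' throughout, and the proof is correct as it stands.
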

\begin{proof}
Note that $CA-AC={\mb u}{\mb v}^HA-A{\mb u}{\mb v}^H$, $\Delta(A)$ is Hermitian and, therefore, 
$\Delta(A)$ has rank 
two and ${\cal S}=\span\{ {\mb{u}}, A{\mb u}\}$. 
Because of the symmetry of $\Delta(A)$, $({\mb v}, A^H{\mb v})$ is  a  basis of ${\cal S}$ as well. 
Moreover, if ${\mb u}$ and ${\mb v}$ are linearly independent they  form a basis for ${\cal S}$ since 
both vectors belong to ${\cal S}$.
\end{proof}

\begin{lem}\label{lem2}
Let $A$ be a 1-almost normal matrix  with   $C={\mb u}{\mb v}^H$, 
where  ${\mb u}$ and ${\mb v}$ are  linearly independent. 
Then we have 
$$A \, A_H^k{\mb u}=\sum_{j=0}^k \lambda_j^{(k)} A_H^j{\mb u}+\sum_{j=0}^k \mu_j^{(k)} A_H^j{\mb v}, 
\qquad k=1, 2, \ldots;
$$
and similarly
$$A^H \, A_H^k{\mb v}=\sum_{j=0}^k {\hat{\lambda}}^{(k)}_j A_H^j{\mb u}+\sum_{j=0}^k {\hat{\mu}}^{(k)}_j 
A_H^j{\mb v}, \qquad k=1, 2, \ldots.
$$
\end{lem}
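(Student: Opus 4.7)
The plan is to prove both identities by induction on $k$, with the essential tool being the commutator identity
\[
[A,A_H] \;=\; \tfrac{1}{2}[A,A^H] \;=\; -\tfrac{1}{2}\Delta(A) \;=\; \tfrac{1}{2}(AC-CA).
\]
Since $C=\mathbf{u}\mathbf{v}^H$, a direct computation shows that for any vector $\mathbf{w}$,
\[
[A,A_H]\mathbf{w} \;=\; \tfrac{1}{2}(\mathbf{v}^H\mathbf{w})\,A\mathbf{u} \;-\; \tfrac{1}{2}(\mathbf{v}^H A\mathbf{w})\,\mathbf{u},
\]
so the image of $[A,A_H]$ sits inside $\operatorname{span}(\mathbf{u}, A\mathbf{u})$, which by Lemma~\ref{lem1} equals $\mathcal{S}=\operatorname{span}(\mathbf{u},\mathbf{v})$. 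This two-dimensional confinement is the geometric fact that makes the induction close.

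For the first identity, the base case $k=0$ is exactly Lemma~\ref{lem1}, which gives $A\mathbf{u}\in\operatorname{span}(\mathbf{u},\mathbf{v})$. For the inductive step, I would split
\[
AA_H^k\mathbf{u} \;=\; A_H\bigl(AA_H^{k-1}\mathbf{u}\bigr) \;+\; [A,A_H]A_H^{k-1}\mathbf{u}.
\]
The first summand, by the inductive hypothesis at level $k-1$ followed by premultiplication by $A_H$, becomes a linear combination of $A_H^j\mathbf{u}$ and $A_H^j\mathbf{v}$ for $1\le j\le k$. The second summand lies in $\operatorname{span}(\mathbf{u},\mathbf{v})$ by the observation above, and thus accounts for the missing $j=0$ terms. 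Adding the two pieces gives the required expansion with $0\le j\le k$.

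The second identity follows by a parallel argument: one uses $[A^H,A_H]=\tfrac12\Delta(A)=\tfrac12(CA-AC)$, whose range is again contained in $\operatorname{span}(\mathbf{u},A\mathbf{u})=\operatorname{span}(\mathbf{u},\mathbf{v})$, and initializes the induction with $A^H\mathbf{v}\in\operatorname{span}(\mathbf{u},\mathbf{v})$, which is another direct consequence of Lemma~\ref{lem1}. The inductive step proceeds identically after swapping the roles of $\mathbf{u}$ and $\mathbf{v}$.

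The only real difficulty is verifying that every index produced during the induction remains within the stated bound $0\le j\le k$, which relies crucially on the cancellation $AA_H - A_H A = -\tfrac12\Delta(A)$ having image in the two-dimensional space $\mathcal{S}$. This in turn depends essentially on the rank-one form of $C$ together with the basis statement of Lemma~\ref{lem1}; without either ingredient, the commutator could raise the degree by more than the single power of $A_H$ that the split already absorbs.
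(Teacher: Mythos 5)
Your proof is correct and follows essentially the same route as the paper: both arguments induct on $k$ using the identity $AA_H - A_HA = -\tfrac{1}{2}\Delta(A)$ to peel off one power of $A_H$, and both close the induction by noting that the range of $\Delta(A)$ is $\mathcal{S}=\operatorname{span}(\mathbf{u},\mathbf{v})$ (your explicit rank-one computation of $[A,A_H]\mathbf{w}$ is just a more hands-on way of obtaining what the paper takes directly from Lemma~\ref{lem1}).
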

\begin{proof}
We prove the first case by induction on $k$. 
Observe that  it holds 
\[
A_H A -A A_H=\frac{\Delta(A)}{2}, 
\]
which gives 
\[
A A_H\mb  u=A_H A\mb  u -\frac{\Delta(A)}{2}\mb  u.
\]
 From  ${\cal S}=\span\{{\mb u}, {\mb v}\}$, $\Delta(A)\B u\in {\cal S}$ and 
 $A{\mb u}\in{\cal S}$ we deduce the relation for $k=1$. Then 
assume the thesis is true for $k$ and let prove it for $k+1$.
Denote by ${\mb x}=A_H^k{\mb u}$, we  have 
\[
A\, A_H^{k+1}{\mb u}=A\,A_H{\mb x}= A_H A \mb x  -\frac{\Delta(A)}{2}\mb  x.
\]
Since  $\Delta(A){\mb x}\in {\cal S}$,  applying the inductive hypothesis we get
the thesis. 
The proof of the second case proceeds analogously by using 
\[
A^HA_H - A_HA^H=\frac{\Delta(A)}{2}.
\]
\end{proof}

\begin{lem}\label{lem3}
Let $Z=[\B z_1| \ldots|\B z_\ell] \in \mathbb C^{n\times \ell}$ and $X \in \mathbb C^{n\times \ell}$ 
such that $\span{\{Z\}}\colon=\span\{\mb  z_1, \ldots, \mb  z_\ell\}=\span{\{X\}}$, 
then ${\mc K}_{\ j}(A, Z)={\mc K}_{\ j}(A, X)$.
\end{lem}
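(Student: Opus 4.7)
The plan is to reduce the claim to the single observation that column spans are preserved under right multiplication by any matrix, in particular by $A^k$ acting on the left.

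First, I would use the hypothesis $\span\{Z\} = \span\{X\}$ to produce $\ell \times \ell$ matrices $M$ and $N$ with $X = Z M$ and $Z = X N$. This is possible because each column of $X$, lying in $\span\{Z\}$, can be written as a linear combination of $\B z_1,\dots,\B z_\ell$ (and symmetrically for $Z$ in terms of $X$). Then for every integer $k \geq 0$,
\[
A^k X = A^k (Z M) = (A^k Z)\, M,
\]
so each column of $A^k X$ is a linear combination of the columns of $A^k Z$. Summing over $k = 0,1,\dots,j-1$ yields $\mc{K}_j(A,X) \subseteq \mc{K}_j(A,Z)$.

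The reverse inclusion $\mc{K}_j(A,Z) \subseteq \mc{K}_j(A,X)$ follows by the symmetric computation $A^k Z = (A^k X)\, N$. Together the two inclusions give $\mc{K}_j(A,Z) = \mc{K}_j(A,X)$. There is really no obstacle here; the statement is essentially the observation that the block Krylov subspace depends only on the span of the starting block, not on the particular choice of columns representing that span.
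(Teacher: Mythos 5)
Your proof is correct and follows essentially the same route as the paper: both rest on the identity $A^k(ZM)=(A^kZ)M$, the paper using a single nonsingular change-of-basis matrix $B$ with $Z=XB$ (packaged as $I_{j+1}\otimes B$), while you use two matrices $M,N$ and prove the two inclusions separately, which if anything sidesteps the small justification needed for the existence of an invertible $B$ when the blocks are rank-deficient.
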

\begin{proof}
If $\span\{Z\}=\span\{X\}$,
 then there exists a square nonsingular matrix $B$ such that $Z=X\, B$. 
Let ${\mb u}\in{\mc K}_{\ j}(A, Z)$, then  we can write ${\mb u}$ as a linear combination of 
the vectors of ${\mc K}_{\ j}(A, Z)$, that is there exists a $(j+1)\,s$ vector ${\mb a}$ such that
\begin{eqnarray*}
\mb{u}&=&[Z, AZ, \ldots, A^{j}Z]{\mb a}=[XB, AXB, \ldots, A^{j}XB]{\mb a}=\\
&=&[X, AX, \ldots, A^{j}X]\left[\begin{array}{cccc}B &&&\\&B&&\\ && \ddots&\\ 
&&&B\end{array}\right]{\mb a}=[X, AX, \ldots, A^{j}X]{\mb b}\in {\mc K}_{\ j}(A, X),
\end{eqnarray*}
where ${\mb b}=(I_{j+1} \otimes B){\mb a}.$
\end{proof}
 
We denote as ${\mc K}_{\ j}(A, <\B z_1, \ldots, \B z_l>)$ the Krylov subspace of $A$ generated starting from 
any  initial matrix $X\in \mathbb C^{n\times  \ell}$ satisfying $ \span\{\B z_1, \ldots, \B z_\ell\}=\span{\{X\}}$.

The main result of this section is the following. 

\begin{thm}\label{main:uv}
Let $A$ be a 1-almost normal matrix with $C={\mb u}{\mb v}^H$. If ${\mb u}$ and ${\mb v}$ are linearly 
independent then 
 ${\mc K}_{\  j}(A_{AH}, <{\mb u}, {\mb v}>)\subseteq{\mc K}_{\ j}(A_H, <{\mb u}, {\mb v}>)$ for each $j$. 
Thus, if the block Lanczos process does not break down prematurely, $U^HA_HU$  e $U^HA_{AH}U$ are block-tridiagonal  
and, hence, $U^HAU$ is block tridiagonal as well. The size of the blocks is at most two.
\end{thm}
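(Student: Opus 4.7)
The proof proceeds in three stages: (i) an algebraic strengthening of Lemma~\ref{lem2} that governs how $A_{AH}$ acts on ``mixed'' $A_H$-powers of $\mb{u}$ and $\mb{v}$, (ii) an induction that yields the Krylov inclusion, and (iii) a block Lanczos argument combined with antihermitian symmetry.

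Using $A^H=2A_H-A$, I would first derive from Lemma~\ref{lem2} that
$$A^H A_H^k\mb{u}=2A_H^{k+1}\mb{u}-A A_H^k\mb{u}\in\span\{A_H^j\mb{u},\,A_H^j\mb{v}:\,0\le j\le k+1\},$$
and symmetrically that $A\, A_H^k\mb{v}$ lies in the same span. Subtracting pairs and dividing by two yields the key identity
$$A_{AH}A_H^k\mb{u},\;A_{AH}A_H^k\mb{v}\;\in\;\span\{A_H^j\mb{u},\,A_H^j\mb{v}:\,0\le j\le k+1\}.\qquad(\star)$$
A straightforward induction on $k$, applying $(\star)$ to each summand of an expression for $A_{AH}^k\mb{u}$, then shows that $A_{AH}^k\mb{u}$ and $A_{AH}^k\mb{v}$ belong to $\span\{A_H^j\mb{u},A_H^j\mb{v}:\,0\le j\le k\}$. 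Invoking Lemma~\ref{lem3}, this gives precisely ${\mc K}_{\ j}(A_{AH},<\mb{u},\mb{v}>)\subseteq {\mc K}_{\ j}(A_H,<\mb{u},\mb{v}>)$ for every $j$.

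To obtain the reduction, I would run the block Lanczos procedure on $A_H$ with initial block $[\mb{u}\,|\,\mb{v}]$. Since $\mb{u},\mb{v}$ are linearly independent the initial block has size $2$, and the block sizes produced by block Lanczos are non-increasing, so every block has size at most~$2$. Assuming no premature breakdown, standard theory gives $U^H A_H U$ block tridiagonal with $\span(U(:,1:s_j))={\mc K}_{\ j}(A_H,<\mb{u},\mb{v}>)$. By $(\star)$, $A_{AH}$ maps ${\mc K}_{\ j}(A_H,<\mb{u},\mb{v}>)$ into ${\mc K}_{\ j+1}(A_H,<\mb{u},\mb{v}>)$, so $U^H A_{AH} U$ is block upper Hessenberg; since $A_{AH}^H=-A_{AH}$, this matrix is antihermitian, and an antihermitian block upper Hessenberg matrix must be block tridiagonal (the blocks below the sub-block-diagonal vanish by definition, and the blocks above the super-block-diagonal vanish by antihermitian symmetry). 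Summing, $U^H A U=U^H A_H U+U^H A_{AH}U$ is block tridiagonal with block sizes at most two.

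The main obstacle is the identity $(\star)$: it is exactly what forces $A_{AH}$ to raise the $A_H$-degree by at most one on the mixed generators, and it is what ultimately yields the block tridiagonal structure of $U^H A_{AH} U$. Once $(\star)$ is in place, the induction and the block Lanczos wrap-up are essentially mechanical.
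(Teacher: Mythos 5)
Your proposal is correct and follows essentially the same route as the paper: the key identity $(\star)$ is just a repackaging of Lemma~\ref{lem2} (together with Lemma~\ref{lem1} for the degree-zero terms $A\mb{u}$, $A^H\mb{v}$), the induction on the $A_H$-degree is the paper's inductive step, and the block Lanczos wrap-up with the antihermitian-symmetry argument makes explicit what the paper leaves implicit. No gaps worth noting.
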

\begin{proof}
The proof is by induction on $j$. For $j=1$,  let ${\mb x}\in {\mc K}_{\ 1}(A_{AH}, <{\mb u}, {\mb v}>)$, 
we need to prove that  ${\mb x}\in{\mc K}_{\ 1}(A_H, <{\mb u}, {\mb v}>)$. Since 
${\mb x}\in {\mc K}_{\ 1}(A_{AH}, <{\mb u}, {\mb v}>)$, then 
$ {\mb x}\in \span\{{\mb u}, {\mb v}, A_{AH}\,{\mb u}, A_{AH}\,{\mb v})$. 
It is enough to prove that $A_{AH}{\mb u}\in {\mc K}_{\ 1}(A_H, <{\mb u}, {\mb v}>) $ 
and $A_{AH}{\mb v}\in {\mc K}_{\ 1}(A_H, <{\mb u}, {\mb v}>)$. From  
\begin{equation}\label{duo}
A_{AH}+A_H=A, \quad  A_{AH}-A_H=-A^H
\end{equation}
 we obtain that 
\[
A_{AH}{\mb u}=-A_H{\mb u}+A\, {\mb u}.
\]
Since from  Lemma~\ref{lem1} $A{\mb u}\subseteq\span\{{\mb u}, 
{\mb v}\}\in {\mc K}_{\ 1}(A_H, <{\mb u}, {\mb v}>),$  
we conclude that $A_{AH}{\mb u}\in {\mc K}_{\ 1}(A_H, <{\mb u}, {\mb v}>).$
Similarly  we  find that 
\[
A_{AH}{\mb v}=A_H{\mb v}-A^H\, {\mb v}\in {\mc K}_{\ 1}(A_H, <{\mb u}, {\mb v}>).
\]

Assume  now that the thesis holds for $j$ and prove it for $j+1$. 
For the linearity of the Krylov subspaces we can prove 
the thesis on the monomials and for each of the starting vectors ${\mb u}$ and ${\mb v}$. 
Let ${\mb x}=A_{AH}^{j}\,{\mb u}$. We have
$$
A_{AH}^{j+1}\,{\mb u}=A_{AH}{\mb x}
$$
Since by inductive hypothesis ${\mb x}\in {\mc K}_{\ j}(A_H, <\mb u, \mb v>),$ ${\mb x}=\sum_{k=0}^j 
\alpha_k A_H^k {\mb u}+\sum_{k=0}^j \beta_k A_H^k {\mb v}$, and using  \eqref{duo} we obtain that 
\begin{eqnarray*}A_{AH}^{j+1}{\mb u}&=&A_{AH}{\mb x}=(-A_H+A)\sum_{k=0}^{j} \alpha_k A_H^k 
{\mb u}+(A_H-A^H)\sum_{k=0}^{j} \beta_k A_H^k {\mb v}\\
&=&-\sum_{k=1}^{j+1} \alpha_k A_H^k {\mb u}+\sum_{k=1}^{j+1} \beta_k A_H^k {\mb v}+\sum_{k=0}^j 
\alpha_k A A_H^k{\mb u}-\sum_{k=0}^{j} \beta_k A^H A_H^k {\mb v}.
\end{eqnarray*}
By applying Lemma~\ref{lem2} to  each term of the form $AA_H^k{\mb u}$ and $A^H A_H^k{\mb v}$   in  the 
previous relation 
we
obtain  that $A_{AH}^{j+1}{\mb u}\in {\mc K}_{\ j+1}(A_H, ,{\mb u}, {\mb v}>)$. 
With a similar technique we  prove that  $A_{AH}^{j+1}\,{\mb v}\in {\mc K}_{\ j+1}(A_H, <{\mb u}, {\mb v}>)$.
\end{proof}

Lemma~\ref{lem3}, provided ${\mb u}$ and ${\mb v}$ are linearly independent, 
proves that we can apply the block Lanczos procedure to any pair  of linearly independent vectors in ${\cal S}$. 
 The remaining case  where ${\mb u}$ and ${\mb v}$ are not linearly independent, that is the rank--one 
correction has the form $C=\alpha\, {\mb u}{\mb u}^H$,  can treated as follows.

\begin{thm}\label{uvldip}
Let $A$ be a 1-almost normal matrix with $C=\alpha\, {\mb u}{\mb u}^H$. Then 
${\mc K}_{\ j}(A_{AH}, {\mb u})\subseteq{\mc K}_{\ j}(A_H, {\mb u})$. Hence, if breakdown does not occur, 
the classic Lanczos process applied to  $A_H$ with starting vector ${\mb u}$ returns a
 unitary matrix $U$ which reduces  $A_{AH}$ to tridiagonal form and therefore also $U^HAU$ is  tridiagonal. 
\end{thm}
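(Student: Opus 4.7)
My plan is to adapt the inductive argument of the proof of Theorem~\ref{main:uv} to the present setting where $\mathbf u$ and $\mathbf v$ are proportional, so that the starting set collapses to $\{\mathbf u\}$. The critical step is to recover a substitute for the relation $A\mathbf u\in\text{span}\{\mathbf u,\mathbf v\}$ used there; the natural analogue is $A\mathbf u\in\text{span}\{\mathbf u,A_H\mathbf u\}$, and it can be extracted from the Hermiticity of the commutator. Equating $\Delta(A)=CA-AC$ with its adjoint and plugging in $C=\alpha\mathbf u\mathbf u^H$ rearranges to
\[
\mathbf u\mathbf u^H(\alpha A+\bar\alpha A^H)=(\alpha A+\bar\alpha A^H)\mathbf u\mathbf u^H,
\]
so the Hermitian matrix $B:=\alpha A+\bar\alpha A^H$ commutes with $\mathbf u\mathbf u^H$. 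Applying this identity to $\mathbf u$ yields $B\mathbf u\in\text{span}\{\mathbf u\}$, and combining with $A\mathbf u+A^H\mathbf u=2A_H\mathbf u$ gives a $2\times 2$ linear system for $A\mathbf u$ and $A^H\mathbf u$ whose determinant is $\alpha-\bar\alpha$. When $\alpha\notin\mathbb R$ the system resolves and yields $A\mathbf u$ as an explicit combination of $\mathbf u$ and $A_H\mathbf u$. When $\alpha\in\mathbb R$, $C$ is Hermitian and the same commutation forces $\mathbf u$ to be an eigenvector of $A_H$ itself, so the classical Lanczos process breaks down immediately and the non-breakdown hypothesis of the theorem rules this regime out.

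With the base identity in hand, the next step is the single-vector analogue of Lemma~\ref{lem2}, namely $AA_H^k\mathbf u\in\mathcal K_{k+2}(A_H,\mathbf u)$ for every $k\ge 0$. I would prove it by induction on $k$ using $AA_H=A_HA-\tfrac12\Delta(A)$, which gives
\[
AA_H^{k+1}\mathbf u=A_H\bigl(AA_H^k\mathbf u\bigr)-\tfrac12\Delta(A)A_H^k\mathbf u.
\]
The first term lies in $\mathcal K_{k+3}(A_H,\mathbf u)$ by the inductive hypothesis. For the second, $\Delta(A)\mathbf x=\alpha(\mathbf u^HA\mathbf x)\mathbf u-\alpha(\mathbf u^H\mathbf x)A\mathbf u$ is always a linear combination of $\mathbf u$ and $A\mathbf u$, both of which lie in $\mathcal K_2(A_H,\mathbf u)\subseteq\mathcal K_{k+3}(A_H,\mathbf u)$ by the base identity, so the second term is also absorbed. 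Note that compared to Lemma~\ref{lem2} the index shift is $2$ rather than $1$; this is the price paid for having to reintroduce $A_H\mathbf u$ in order to express $A\mathbf u$.

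The main inclusion $\mathcal K_j(A_{AH},\mathbf u)\subseteq\mathcal K_j(A_H,\mathbf u)$ then follows by induction on $j$, transcribed essentially verbatim from the proof of Theorem~\ref{main:uv}: any element of $\mathcal K_j(A_{AH},\mathbf u)$ is of the form $A_{AH}\mathbf x$ with $\mathbf x\in\mathcal K_{j-1}(A_H,\mathbf u)$ by the inductive hypothesis, and expanding $\mathbf x=\sum_kc_kA_H^k\mathbf u$ and using $A_{AH}=A-A_H$ reduces everything to the Lemma~\ref{lem2} analogue just proved. With this inclusion, the Lanczos basis for $A_H$ starting from $\mathbf u$ also tridiagonalises $A_{AH}$ (upper Hessenberg by the inclusion, tridiagonal because $A_{AH}$ is skew-Hermitian), and therefore $U^HAU=U^HA_HU+U^HA_{AH}U$ is tridiagonal as well.

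I expect the main obstacle to be precisely the base identity $A\mathbf u\in\text{span}\{\mathbf u,A_H\mathbf u\}$. In Theorem~\ref{main:uv} the corresponding relation was a free consequence of Lemma~\ref{lem1} because $\text{span}\{\mathbf u,\mathbf v\}$ already supplied two independent directions; here one direction of $\mathcal S$ has collapsed and must be reconstructed from the Hermiticity of $\Delta(A)$, which is exactly what dictates the case distinction on whether $\alpha$ is real and reveals the non-real regime as the genuinely interesting one.
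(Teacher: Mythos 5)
Your proposal is essentially correct and, at its core, rests on the \emph{same} algebraic fact as the paper's proof, but it packages that fact quite differently. Your identity $\mathbf u\mathbf u^H(\alpha A+\bar\alpha A^H)=(\alpha A+\bar\alpha A^H)\mathbf u\mathbf u^H$, obtained from the Hermiticity of $\Delta(A)$, is exactly the paper's relation $\hat C B_{AH}=B_{AH}\hat C$ in disguise: the paper rescales to $B=-\frac{i}{\bar\alpha}A$, whose antihermitian part is a nonzero scalar multiple of $\alpha A+\bar\alpha A^H$ and whose correction $\hat C=i\mathbf u\mathbf u^H$ is antihermitian. The difference is in how the identity is exploited. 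The paper reads it as saying that $\mathbf u$ is an eigenvector of $B_{AH}$, so that ${\mc K}_{\ j}(B_{AH},\mathbf u)=\span\{\mathbf u\}$ and the Krylov inclusion for the pair $(B_H,B_{AH})$ is immediate --- no induction, no analogue of Lemma~\ref{lem2}, and no case distinction on $\alpha$. You instead stay with the decomposition $A=A_H+A_{AH}$ of $A$ itself, solve a $2\times2$ system to get $A\mathbf u\in\span\{\mathbf u,A_H\mathbf u\}$, and then rerun the full inductive machinery of Theorem~\ref{main:uv}; this works, but it costs you the case split on $\alpha\in\mathbb R$ and the extra induction. (Your index bookkeeping in the Lemma~\ref{lem2} analogue should be checked against whichever Krylov-index convention is in force --- what the final step really needs is $A_{AH}\,{\mc K}_{\ j}(A_H,\mathbf u)\subseteq{\mc K}_{\ j+1}(A_H,\mathbf u)$, which your base identity $A\mathbf u\in\span\{\mathbf u,A_H\mathbf u\}$ does deliver.)

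The one place where your coverage is genuinely narrower is $\alpha\in\mathbb R$. There you correctly observe that $\mathbf u$ becomes an eigenvector of $A_H$, so the Lanczos process on $A_H$ stalls at the first step and the tridiagonalization claim is vacuous under the non-breakdown hypothesis; but the first assertion of the theorem, the inclusion ${\mc K}_{\ j}(A_{AH},\mathbf u)\subseteq{\mc K}_{\ j}(A_H,\mathbf u)$, is stated without that hypothesis and is left unproved by your argument in this regime. The paper's rescaling avoids the dichotomy altogether by transferring the whole statement to the pair $(B_H,B_{AH})$ --- at the price that what it literally proves is a Krylov inclusion for $B$ rather than for $A$, with the roles of the Hermitian and antihermitian parts interchanged when $\alpha$ is real. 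So your route is legitimate and arguably more explicit about where the hypothesis $\alpha\notin\mathbb R$ enters, while the paper's rescaling is shorter and uniform in $\alpha$.
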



\begin{proof}
Set $B=-\frac{i}{\bar\alpha}\, A$ obtaining for $B$ the following relation
$$
B^HB-BB^H=\hat C B-B\hat C,
$$
with $\hat C=i{\mb u}{\mb u}^H$, that is with an antihermitian correction. 
Since $\Delta(B)$ is hermitian, we obtain
$$
\hat C B-B\hat C=B^H {\hat C}^H-{\hat C}^H B^H=-B^H\hat C +\hat C B^H,
$$
and hence
$$
\hat C B_{AH}=B_{AH} \hat C,
$$
meaning that $\span\{B_{AH}{\mb u}\}\subseteq\span\{{\mb u}\}$. This proves that ${\mc K}_j(B_{AH}, 
{\mb u})\subseteq \span\{{\mb u}\}\subseteq {\mc K}_j(B_{H}, {\mb u})$, and hence that $B_{AH}$ is 
brought in tridiagonal form by means of the same unitary matrix which tridiagonalizes $B_H$. 
Then $B$ and $A$ are brought to tridiagonal form by the same $U$.
\end{proof}

Note that Theorem~\ref{uvldip} states that any 1-almost normal matrix with $C=\alpha{\mb u}{\mb u}^H$ 
can be unitarily transformed into tridiagonal form.

\subsection{Case of rank two}
The class of 2-almost normal matrices  is a 
richer and more interesting class. 
For example, rank--one perturbations of unitary matrices, 
such as the companion matrix, belong to this class. Also  generalized companion matrices 
for polynomials expressed in the 
Chebyshev basis \cite{EGG08,VDC10}
can be viewed as rank--one perturbation of Hermitian matrices  
and are 2-almost normal.

Assume $A^HA-AA^H=CA-AC$, where $C={\mb u}{\mb v}^H+{\mb x}{\mb y}^H$. 
Note that  $\dim({\cal S})\le 4$.
If  the column space of $\Delta(A)$ has dimension  exactly 4 then 
possible bases for ${\cal S}$ are $<{\mb u}, {\mb x}, A{\mb u}, A{\mb x}>$, $<{\mb v}, 
{\mb y}, A^H{\mb v}, A^H{\mb y}>$ and  $<{\mb u}, {\mb v}, {\mb x}, {\mb y}>$ when the four vectors are  
linearly independent.

A theorem analogous to Theorem~\ref{main:uv} for 
 2-almost normal matrices  uses a generalization of Lemmas~\ref{lem1} and \ref{lem2}.\

\begin{lem}\label{lem1k}
Let $A$ be a  2-almost normal matrix, and let $C=UV^H$, with $U, V\in\C^{n\times 2}$ be 
a rank-$2$ matrix such that $\Delta(A)=CA-AC$ has rank $4$. Then, the columns of the matrices
$[U, AU]$  and of $[V, A^HV]$ span the space ${\cal S}$. Moreover, if $\mbox{rank}([U,V])=4$ the columns of 
the matrix $[U,V]$ form a basis for ${\cal S}$ as well.
\end{lem}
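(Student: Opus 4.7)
The plan is to mimic the argument of Lemma \ref{lem1} and upgrade each step from vectors to $2$-column matrices. The first move is to expand
\[
\Delta(A)=CA-AC=UV^HA-AUV^H=U(V^HA)-(AU)V^H,
\]
and observe that every column of $\Delta(A)$ is a linear combination of the columns of $U$ and of $AU$. This gives the one-sided containment $\cS\subseteq\span\{\text{columns of }[U,AU]\}$. Since the right-hand side has at most $4$ columns and the left-hand side has dimension exactly $4$ by hypothesis, the containment is an equality and $[U,AU]$ has full column rank; hence its columns form a basis of $\cS$.

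For the $[V,A^HV]$ statement I would exploit the Hermiticity of $\Delta(A)$. Taking conjugate transposes in the commutator equation yields
\[
\Delta(A)=\Delta(A)^H=(CA-AC)^H= C^HA^H-A^HC^H = VU^HA^H-A^HVU^H,
\]
so exactly the same linear-combination argument applied to this alternative expression shows $\cS\subseteq\span\{\text{columns of }[V,A^HV]\}$; the dimension count again forces equality and linear independence of the four columns.

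For the final assertion, note that columns of $U$ belong to $\cS$ (they appear as $Ue_i$ where $e_i$ is a standard basis vector, and by step one each such column is in $\span\{\text{cols of }[U,AU]\}=\cS$), and similarly columns of $V$ belong to $\cS$ (by step two). If $\mathrm{rank}([U,V])=4$, then these four columns are linearly independent vectors inside the $4$-dimensional space $\cS$, so they constitute a basis.

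There is no real obstacle here; the only subtlety, entirely parallel to Lemma \ref{lem1}, is tracking precisely where the assumption $\mathrm{rank}(\Delta(A))=4$ is used, namely to turn the obvious containments into equalities so that the spanning sets of size $4$ become bases. The rank-$2$ case of $C$ does not introduce any extra algebraic difficulty beyond packaging the two rank-one factors into matrices $U$ and $V$.
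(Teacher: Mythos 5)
Your argument is correct and is essentially the proof the paper intends (the paper omits it, presenting the lemma as the direct generalization of Lemma~\ref{lem1}, whose proof is exactly the column-space-plus-dimension-count argument you give). The only blemish is a harmless sign slip in the conjugate-transpose step: $(CA-AC)^H=A^HC^H-C^HA^H$, not its negative, but this does not affect the column space and hence not the conclusion.
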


Similarly Lemma~\ref{lem2} can be generalized replacing 
the vectors ${\mb u}$ and ${\mb v}$ with two $n\times 2$ matrices.

\begin{lem}\label{lem2k}
Let $A$ be a 2-almost normal matrix with $C=UV^H$, with $U, V\in\C^{n\times 2}$, 
with $\mbox{rank}(\Delta(A))=4$ and $\mbox{rank}([U,V])=4$. Then we have 
$$A \, A_H^j U\in{\mc K}_{\ j}(A_H, [U,V])\qquad j=1, 2, \ldots
$$
and similarly
$$A^H \, A_H^j V\in{\mc K}_{\ j}(A_H, [U,V])\qquad j=1, 2, \ldots.
$$
\end{lem}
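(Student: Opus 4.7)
The plan is to carry over the induction that proved Lemma~\ref{lem2} almost verbatim, using the two rank-$4$ hypotheses to reduce matters to the rank-one calculation. The key identity, which follows directly from the definition of $A_H$, is
$$A\,A_H = A_H A - \tfrac{1}{2}\Delta(A), \qquad A^H A_H = A_H A^H + \tfrac{1}{2}\Delta(A),$$
so the whole point is to show that the ``correction'' $\Delta(A)\,X$ and the terms $A U$, $A^H V$ always land in $\span\{[U,V]\} = {\cal S}$, which by Lemma~\ref{lem1k} is exactly $\mathcal{S}$ under the rank-$4$ hypotheses. Concretely, the rank-$4$ assumption on $\Delta(A)$ forces $\dim {\cal S} = 4$, and the rank-$4$ assumption on $[U,V]$ then makes the columns of $[U,V]$ a basis of ${\cal S}$; since ${\cal S}$ is the range of $\Delta(A)$, we obtain $\Delta(A)\,X \subseteq \span\{[U,V]\}$ for \emph{every} matrix $X$, and Lemma~\ref{lem1k} gives us in addition that $AU$ and $A^H V$ have their columns in $\span\{[U,V]\}$.

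With these two observations in hand, I would prove the first claim by induction on $j$. For the base case $j=1$,
$$A\,A_H U = A_H(AU) - \tfrac{1}{2}\Delta(A)\,U;$$
the first summand is in $A_H\,\span\{[U,V]\}$ because $AU \subseteq \span\{[U,V]\}$, and the second summand is in $\span\{[U,V]\}$, so together $A A_H U$ sits in the Krylov subspace generated by $[U,V]$ and $A_H[U,V]$. For the inductive step, assuming the conclusion for $j$, write
$$A\,A_H^{j+1} U \;=\; A_H\bigl(A\,A_H^{j} U\bigr) \;-\; \tfrac{1}{2}\Delta(A)\,A_H^{j} U.$$
The first term is $A_H$ applied to an element of the Krylov subspace furnished by the inductive hypothesis and so merely shifts the index by one; the second term is in ${\cal S} = \span\{[U,V]\}$, hence already in the smallest Krylov subspace and a fortiori in the larger one. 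This completes the induction.

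The second claim, for $A^H A_H^j V$, is entirely symmetric: one uses $A^H A_H = A_H A^H + \tfrac{1}{2}\Delta(A)$, together with the fact from Lemma~\ref{lem1k} that the columns of $A^H V$ lie in ${\cal S} = \span\{[U,V]\}$, and repeats the induction word for word. Note that here, unlike in Lemma~\ref{lem2}, one does \emph{not} need to expand $\Delta(A)\,A_H^j V$ into an explicit combination of monomials $A_H^i U$ and $A_H^i V$; the Krylov-subspace formulation lets us absorb everything into a single containment, which is the only substantive simplification needed to pass from the $1\times 1$ setting to the $2\times 2$ setting.

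The only delicate point, and the one I would double-check carefully, is the interplay of the two rank hypotheses in step~2: without $\mathrm{rank}([U,V])=4$ we cannot identify $\span\{[U,V]\}$ with ${\cal S}$ and the $\Delta(A)$ terms escape the Krylov subspace, while without $\mathrm{rank}(\Delta(A))=4$ the conclusion of Lemma~\ref{lem1k} that $AU,A^HV\subseteq {\cal S}$ loses its bite because ${\cal S}$ is then a proper subspace of $\span\{[U,V]\}$ and a separate argument is needed. Everything else is bookkeeping modeled on the proof of Lemma~\ref{lem2}.
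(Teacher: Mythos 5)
Your proof is correct and follows essentially the same route as the paper, which omits an explicit proof of this lemma and refers back to the induction used for Lemma~\ref{lem2}: the identity $A\,A_H = A_H A - \tfrac{1}{2}\Delta(A)$ (and its adjoint counterpart), the identification of $\span\{[U,V]\}$ with ${\cal S}$ via the two rank-$4$ hypotheses, and induction on $j$. Recasting the explicit monomial expansions of Lemma~\ref{lem2} as Krylov-subspace containments is only a cosmetic streamlining, not a genuinely different argument.
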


We are now ready for the  desired generalization of the main result. 
The proof is similar to that of Theorem~\ref{main:uv} and it is omitted here. 
\begin{thm}\label{main:k}
Let $A$ be a 2-almost normal matrix with $C=UV^H$, with 
$U, V\in \C^{n\times 2}$. If $\mbox{rank}([U,V])=4$ and $\mbox{rank}(\Delta(A))=4$,  then we have 
 ${\mc K}_{\ j}(A_{AH}, [U,V])\subseteq{\mc K}_{\ j}(A_H, [U, V])$ for each $j$. 
Hence, if the block Lanczos process does not break down, 
the unitary matrix which transforms $A_H$ to block-tridiagonal form brings also $A_{AH}$ 
to block-tridiagonal form, and hence also $A$ is brought to block tridiagonal form with 
blocks of size at most 4. 
\end{thm}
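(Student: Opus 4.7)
The plan is to carry out the same induction on $j$ as in the proof of Theorem~\ref{main:uv}, replacing the vectors ${\mb u},{\mb v}$ by the $n\times 2$ matrices $U,V$ and substituting Lemmas~\ref{lem1k} and \ref{lem2k} for Lemmas~\ref{lem1} and \ref{lem2}. For the base case $j=1$, the identities $A_{AH}=-A_H+A$ and $A_{AH}=A_H-A^H$ give $A_{AH}U=-A_HU+AU$ and $A_{AH}V=A_HV-A^HV$; Lemma~\ref{lem1k} places the columns of $AU$ and $A^HV$ in ${\cal S}=\span\{[U,V]\}\subseteq{\mc K}_{\ 1}(A_H,[U,V])$, so $A_{AH}[U,V]\subseteq{\mc K}_{\ 1}(A_H,[U,V])$ follows.

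For the inductive step I would assume ${\mc K}_{\ j}(A_{AH},[U,V])\subseteq{\mc K}_{\ j}(A_H,[U,V])$ and upgrade it to level $j+1$. By linearity of Krylov subspaces it suffices to examine $A_{AH}^{j+1}{\mb w}$ for ${\mb w}$ a column of $U$ or of $V$. Using the inductive hypothesis I would express $A_{AH}^j{\mb w}$ as a combination of columns of $A_H^kU$ and $A_H^kV$ with $k\le j$, then apply one more $A_{AH}$, distributing it as $-A_H+A$ on the $U$-terms and as $A_H-A^H$ on the $V$-terms. This yields pure $A_H$-polynomial contributions already in ${\mc K}_{\ j+1}(A_H,[U,V])$, plus residual terms of the form $AA_H^k{\mb u}$ and $A^HA_H^k{\mb v}$; Lemma~\ref{lem2k}, applied columnwise, places each residual in ${\mc K}_{\ k}(A_H,[U,V])\subseteq{\mc K}_{\ j+1}(A_H,[U,V])$, closing the induction.

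Finally the block tridiagonal conclusion follows mechanically: the block Lanczos procedure run on $A_H$ with starting block $[U,V]$ produces an orthonormal basis whose span at step $j$ equals ${\mc K}_{\ j}(A_H,[U,V])$, and assembling these bases into a unitary $Q$ makes $Q^HA_HQ$ block tridiagonal with blocks of size at most four. The Krylov inclusion just proved says the $A_{AH}$-filtration is refined by the $A_H$-filtration, so $Q^HA_{AH}Q$ inherits the same block tridiagonal sparsity pattern, and hence so does $Q^HAQ=Q^HA_HQ+Q^HA_{AH}Q$. The main obstacle is the bookkeeping in the inductive step: because Lemma~\ref{lem2k} supplies only a containment $AA_H^kU\in{\mc K}_{\ k}(A_H,[U,V])$ rather than the explicit scalar expansion available in Lemma~\ref{lem2}, one has to invoke it separately on each column of $U$ and of $V$ and verify that the residual terms indeed carry index $k\le j$, so that the containment really lands in ${\mc K}_{\ j+1}(A_H,[U,V])$ and not in a larger Krylov subspace.
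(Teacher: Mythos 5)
Your proof is correct and follows exactly the route the paper intends: the paper omits the proof of Theorem~\ref{main:k}, stating only that it is "similar to that of Theorem~\ref{main:uv}," and your argument is precisely that induction with $U,V$ replacing ${\mb u},{\mb v}$ and Lemmas~\ref{lem1k}, \ref{lem2k} replacing Lemmas~\ref{lem1}, \ref{lem2}. No discrepancies to report.
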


Generalizations of these results to  generic $k$-almost normal matrices with $k\geq 2$ are straightforward. 

\section{Almost Hermitian or unitary matrices}

In this section we  specialize the previous results for the remarkable cases 
where $A$ is a rank--one perturbation of a 
Hermitian or a unitary matrix. 
\begin{figure}
\begin{center}
\includegraphics[scale=0.50]{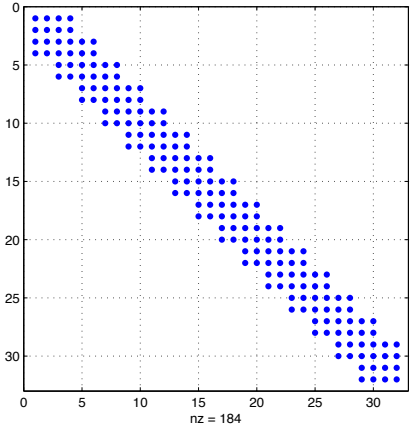}
\caption{Shape of the block tridiagonal matrix obtained from the block-Lanczos procedure 
applied to a arrow matrix
with starting vectors in the column space of $C$}
\label{f1}
\end{center}
\end{figure}
The case of perturbed Hermitian matrices is not directly covered by 
Theorem~\ref{main:k}.  In fact, it  assumes  that $\mbox{rank}(\Delta(A))=\mbox{rank}([U, V])$  or, equivalently, 
that there exists a 
set of $2k$ linearly independent vectors spanning the column space of $\Delta(A)$ whenever 
$A$ is $k-$almost normal. 
If $A=H+{\mb x}{\mb y}^H$, where $H$ is a 
Hermitian matrix, then it is easily seen that 
$A$ is 2-almost normal  and $C={\mb y}{\mb x}^H-{\mb x}{\mb y}^H$.  Generically, 
$\Delta(A)$ has rank 4  but $U=V$ and, therefore, 
$\mbox{rank}([U, V])=2$. However, it is worth noticing that  in this case $C=UV^H$ is antihermitian, i.e.,  $C^H=-C$. 
By exploiting this additional property of $C$ 
we can prove that
\begin{equation}\label{incl}
{\mc K}_{\ j}(A_{AH}, U)\subseteq{\mc K}_{\ 0}(A_{H}, U), \quad j\geq 0, 
\end{equation}
meaning that the same unitary matrix which transforms the Hermitian 
part of $A$ toi block tridiagonal form, 
also transforms  the antihermitian part of $A$ to block tridiagonal 
form and,  therefore,  $A$. In order to deduce \eqref{incl},  let us observe that 
$\Delta(A)$ is Hermitian and 
\[
CA-AC=A^H C^H-C^HA^H.
\]
Replacing $C^H=-C$, we obtain that  
\begin{equation}\label{anti}
C(A-A^H)=(A-A^H)C,
\end{equation}
meaning that the antihermitian part of $A$ commutes with $C$. 
Multiplying both sides of \eqref{anti} by the  matrix $V$ we have
\[
(A-A^H)U=U(V^H(A-A^H)V)(V^HV)^{-1}.
\]
which gives \eqref{incl}. 

\begin{figure}
\begin{center}
\includegraphics[scale=0.50]{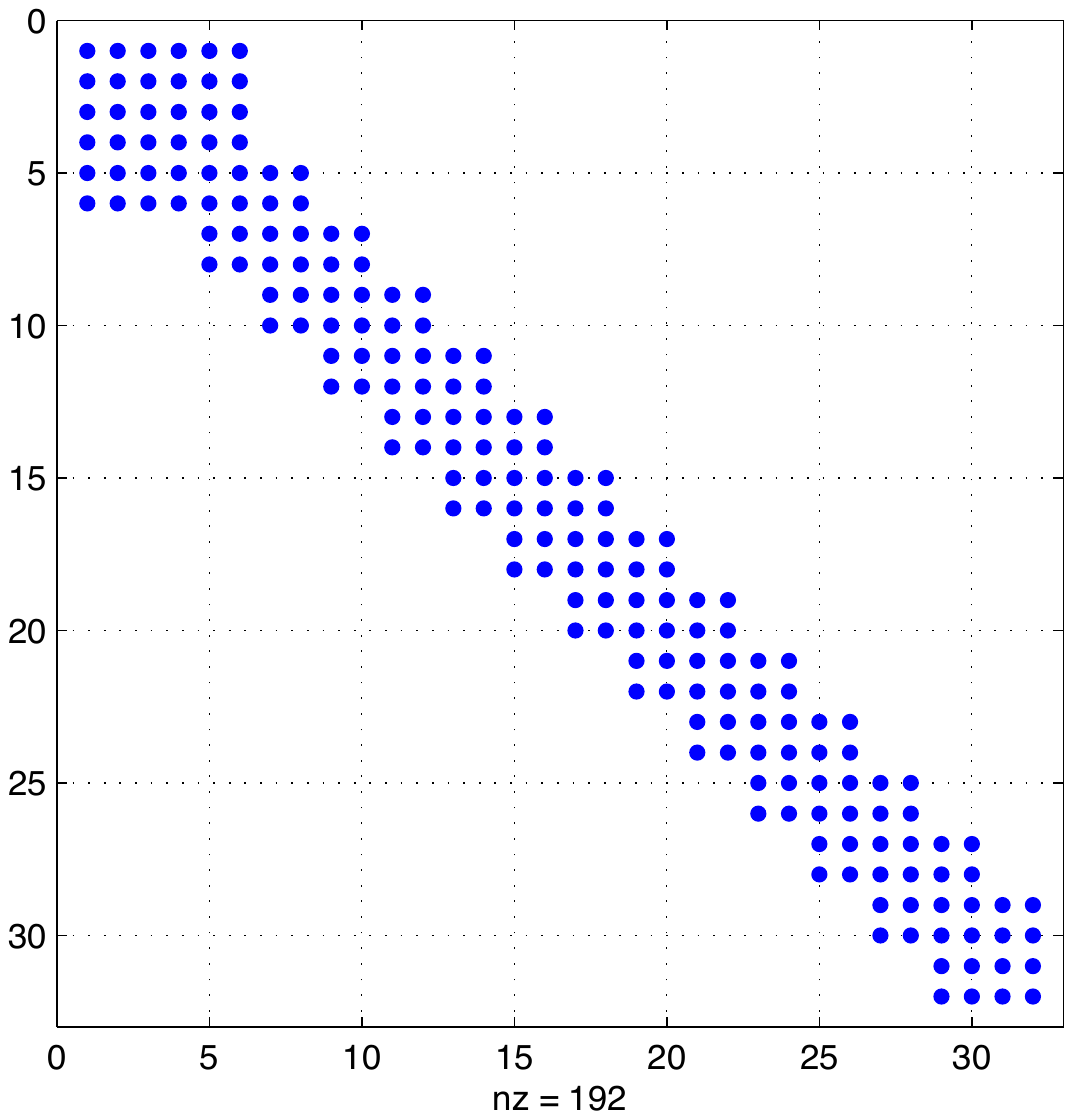}
\caption{Shape of the block tridiagonal matrix obtained from the block-Lanczos procedure applied to a arrow  matrix
with starting vectors in the column space of ${\cal S}$}
\label{f2}
\end{center}
\end{figure}
Summing up, in the case of a rank--one modification of a Hermitian matrix  we can  apply 
the block-Lanczos procedure to $A_H$ starting with only two linearly independent  vectors in  the column space of
$C$, 
for example  ${\mb x}$ and ${\mb y}$, if known, thus computing  a unitary matrix which transforms 
$A$ to a block-tridiagonal matrix with block size two. Differently, we can also employ a basis of 
${\cal S}$ by obtaining a first block of size 4 which immediately shrinks to size 2 in the subsequent 
steps. In Figure \ref{f1} and \ref{f2} we illustrate  the shapes of the 
block tridiagonal matrices determined from  the 
block-Lanczos procedure applied to an arrow matrix with starting vectors in
 the column space of $C$ and ${\cal S}$, 
respectively. 

\begin{figure}
\begin{center}
\includegraphics[scale=0.50]{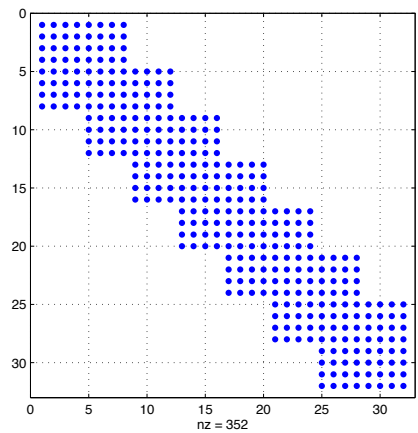}
\caption{Shape of the block tridiagonal matrix obtained from the block-Lanczos 
procedure applied to a companion matrix}
\label{f3}
\end{center}
\end{figure}

It is worth noticing that the matrix $C$ plays an important role for the design of fast structured variants of the QR 
iteration applied to perturbed Hermitian matrices. Specifically, in \cite{EGG08,VDC11} it is shown that the sequence of 
perturbations $C_k\colon =Q_k^H C_{k-1} Q_k$  yields a description of the  upper rank structure  
of the matrices 
$A_k\colon =Q_k^H A_{k-1} Q_k$  generated under the QR process applied to $A_0=A$. 

The case where $A=U+{\mb x}{\mb y}^H$ is a rank--one  correction of a unitary matrix $U$ is 
particularly interesting for applications to polynomial root-finding.  If $A$ is invertible, 
then it is easily seen 
that 
\[
C=\mb y\mb x^H+\displaystyle\frac{U\mb x\mb y^H U^H}{1+\mb y^H U^H\mb x}
\]
 is such that 
\[
A^H A - C A=I_n, \quad A A^H -A C=I_n, 
\]
which implies 
\[
A^H A-A A^H=C A -A C.
\]
Thus, by applying Theorem~\ref{main:k} to the  unitary plus rank--one matrix $A$  
we find that  the block-Lanczos procedure applied to $A_H$ starting with four 
linearly independent vectors in ${\cal S}$ reduces $A$ to a block tridiagonal form as 
depicted in Figure~\ref{f3}.

The issue concerning the relationship between the matrices $C_k$ and $A_k$ generated under the QR 
iteration  is 
more puzzling and, indeed, actually much of the work of fast  companion eigensolvers is spent for 
the updating of the rank structure in the upper triangular portion of $A_k$.   Moreover, 
if $A=A_0$ is initially transformed to upper Hessenberg form by a unitary congruence then the rank of the 
off--diagonal blocks in the upper triangular part of $A_k$ is  generally three whereas the rank of 
$C_k$ is two.  Notwithstanding, the numerical behavior of the QR 
iteration seems to be different if we apply the  iterative process directly to the  block tridiagonal form of the 
matrix.  In this case,  under some mild assumption, it is verified that the rank of the blocks in the upper triangular 
portion of $A_k$ located out of the block tridiagonal profile is at most 2 and, 
in addition, the rank structure of 
these blocks is completely specified by the matrix $C_k$. 
 
\section{Eigenvalues on an algebraic curve}

The property of block tridiagonalization  is inherited by  a larger class of perturbed normal matrices \cite{IK09}. 
It is interesting to consider  such extension  even in simple cases 
in order to enlighten the specific features of almost normal 
matrices with respect to the band reduction and to the QR process. In this section 
we show that the block-Lanczos procedure can be employed for the 
block tridiagonalization  of rank--one perturbations of certain normal matrices whose eigenvalues 
lie on an algebraic curve of degree at most two. These matrices are not in general almost normal, 
but the particular distribution of the eigenvalues of the normal part, guarantees the existence 
of a polyanalytic polynomial of small degree relating the antihermitian part of $A$ with the 
Hermitian part of $A$.

Let  $A\in \mathbb C^{n\times n}$ be a matrix which can be decomposed as 
\begin{equation}\label{normal}
A=N+\mb u\mb v^H, \mb u, \mb v\in \C^n, \quad NN^H-N^HN=0.
\end{equation}
Also suppose that its eigenvalues $\lambda_j=\Re(\lambda_j)+\mathrm i \Im(\lambda_j)$, $1\leq j\leq n$, 
lie on a real algebraic curve of degree 2, i.e., 
$f(\Re(\lambda_j), \Im(\lambda_j))=0$, where $f(x,y)=ax^2 + by^2 + cx y  +dx + ey +f=0$.  From  
\[
\Re(\lambda)= \frac{\lambda + \bar \lambda}{2}, \quad 
\Im(\lambda)=\frac{\lambda -\bar \lambda}{2 \mathrm i}, 
\]
by setting 
\[
x= \frac{z + \bar z}{2}, \quad  y=\frac{z -\bar z}{2 \mathrm i}, 
\]
it follows that $\lambda_j$, $1\leq j\leq n$,  belong to an  algebraic variety  $\Gamma = \{z\in \mathbb C \colon 
p(z)=0\}$ 
defined by 
\[
p(z)=a_{2,0}z^2 + a_{1,1}z \bar z + a_{0,2}{\bar z}^2 + a_{1,0}z + a_{0,1}\bar z+ a_{0,0}=0, 
\]
with $a_{k,j}=\bar a_{j,k}$.  This also means that the 
polyanalytic polynomial $p(z)$ annihilates $N$ in the 
sense that
\begin{equation}\label{normal2}
p(N)=a_{2,0}N^2 + a_{1,1}N N^H + a_{0,2}{N^H}^2 + a_{1,0}N + a_{0,1}N^H+ a_{0,0}I_n=0.
\end{equation}

If $a_{2,0}=\bar a_{0,2}=0$ and $a_{1, 1}=0$ then $\Gamma$ reduces to 
\[
a_{0,1}\bar z=-(a_{1,0}z + a_{0,0}).
\]
that is the case of a shifted Hermitian matrix $N=H+\gamma\, I$. As observed in the previous section 
rank--one corrections  of Hermitian matrices are almost-normal, 
and shifted almost normal matrices are almost-normal as well. Thus
 we can always suppose that  
the following condition  named  ({\it Hypothesis 1}) is fulfilled
\begin{equation}\label{hp}
a_{2,0}+ a_{0,2}-a_{1,1}\neq 0. 
\end{equation}
 In fact when Hypothesis 1 is violated, but not all the terms above are zero,  
then we can consider the modified matrix 
$A'=e^{\displaystyle{\mathrm i \theta}}A=e^{\displaystyle{\mathrm i \theta}} N + \B u' \B v'^H$ and observe that  the eigenvalues of 
$e^{\displaystyle{\mathrm i \theta}} N$ belongs to the algebraic variety 
\[
a'_{2,0}z^2 + a'_{1,1}z \bar z + a'_{0,2}{\bar z}^2 + a'_{1,0}z + a'_{0,1}\bar z+ a'_{0,0}=0, 
\]
where 
\[
a'_{2,0}=a_{2,0}/e^{\displaystyle{2\mathrm i \theta}}, \quad 
a'_{0,2}=a_{0,2}/e^{\displaystyle{-2\mathrm i \theta}}, \quad
a'_{1,1}=a_{1,1}.
\]
Hence, for  a suitable choice of $\theta$ it follows 
\[
 a'_{2,0}+ a'_{0,2}-a'_{1,1}\neq 0.
\] 

Under Hypothesis 1 it is easily seen that the leading part of $p(z)$ can be represented in some useful diverse ways. 
In particular, the $3\times 3$ linear system in the variables $\alpha$, $\beta$ and $\gamma$  determined to satisfy 
\begin{equation}\label{one}
\alpha(z-\bar z)z + \beta(z+\bar z)z +\gamma (z+\bar z)\bar z=a_{2,0}z^2  +a_{1,1}z \bar z + a_{0,2}{\bar z}^2, 
\end{equation}
is given by 
\[
\left\{ \begin{array}{lll}
\gamma=a_{0,2}; \\
\alpha +\beta=a_{2,0}; \\
\beta+\gamma-\alpha=a_{1,1}.
\end{array}\right.
\]
This system is solvable and, moreover, we have 
 $\alpha=\displaystyle\frac{a_{2,0}+a_{0,2}-a_{1,1}}{2}\neq 0$.
Analogously, the $3\times 3$ linear system in the 
variables $\alpha$, $\beta$ and $\gamma$  determined to satisfy 
\begin{equation}\label{two}
\alpha(z-\bar z)\bar z + \beta(z+\bar z)z +
\gamma (z+\bar z)\bar z=a_{2,0}z^2  +a_{1,1}z \bar z + a_{0,2}{\bar z}^2, 
\end{equation}
is given by 
\[
\left\{ \begin{array}{lll}
\beta=a_{2,0}; \\
\gamma -\alpha=a_{0,2}; \\
\beta+\gamma+\alpha=a_{1,1}.
\end{array}\right.
\]
 Again the system is solvable and  
$\alpha=-\displaystyle\frac{a_{2,0}+a_{0,2}-a_{1,1}}{2}\neq 0$.
 
For  $A=N+{\mb u}{\mb v}^H$, with $N$ normal matrix, 
the matrix  $\Delta(A)=A^H A-A A^H$ is a  matrix of 
rank four at most. Specifically, we find that 
\[
\Delta(A)=A^H {\mb u \mb v}^H +\mb v \mb u^H N-A {\mb v} {\mb  u}^H -{\mb u \mb v}^H N^H , 
\]
and, hence, 
the space ${\cal S}$ is included in the subspace
 \[
 \mathcal D\colon =\span\{ \mb u, \mb v, 
A^H\mb u, A\mb v\}\subseteq\mathcal D_s\colon= \span\{\mb u, \mb v, 
A^H\mb u, A^H\mb v, A\mb u, A\mb v\}.
\]
  Also, recall that 
\[
A_{AH} \cdot A_H -A_H\cdot A_{AH}=\frac{1}{2} \Delta(A).
\]
From this by induction it is easy to prove the following result, analogous to Lemma~\ref{lem1}.
\begin{lem}\label{lem31}
For any positive integer $j$ we have 
\[
A_{AH} \cdot A_H^j=A_H^j\cdot A_{AH}  +  \frac{1}{2}\sum_{k=0}^{j-1}
A_H^k \cdot\Delta(A) \cdot A_H^{j-1-k}.
\]
\end{lem}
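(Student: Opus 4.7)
The plan is to prove the identity by straightforward induction on $j$, using the basic commutator relation
\[
A_{AH}\,A_H - A_H\,A_{AH} = \tfrac{1}{2}\Delta(A)
\]
that is recorded just above the statement (and which follows immediately from $A_H=(A+A^H)/2$, $A_{AH}=(A-A^H)/2$, expanding $\Delta(A)=A^HA-AA^H$).

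For the base case $j=1$ the claimed formula reduces exactly to this commutator relation, with the sum collapsing to the single term $A_H^0\,\Delta(A)\,A_H^0 = \Delta(A)$. For the inductive step, assuming the identity holds for $j$, I would multiply both sides on the right by $A_H$ and write
\[
A_{AH}\,A_H^{j+1} = \bigl(A_{AH}\,A_H^{j}\bigr) A_H
= A_H^{j}\,A_{AH}\,A_H + \tfrac{1}{2}\sum_{k=0}^{j-1} A_H^{k}\,\Delta(A)\,A_H^{j-k}.
\]
Next I would use the base case once more to rewrite the first summand as $A_H^{j}\bigl(A_H A_{AH}+\tfrac12\Delta(A)\bigr) = A_H^{j+1} A_{AH} + \tfrac12 A_H^{j}\Delta(A)$, and then absorb the extra $\tfrac12 A_H^{j}\Delta(A)$ into the summation as the missing $k=j$ term. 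After re-indexing the remaining sum, the right-hand side becomes $A_H^{j+1}A_{AH} + \tfrac12\sum_{k=0}^{j} A_H^{k}\,\Delta(A)\,A_H^{j-k}$, which is the claim at index $j+1$.

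There is no genuine obstacle; the only place to be careful is the bookkeeping when shifting the summation index after multiplying by $A_H$ and merging the newly produced $A_H^{j}\Delta(A)$ term. Since every manipulation is an equality of matrices obtained from the linear commutator identity and no further structural hypothesis on $A$ is needed (the identity is a purely formal consequence of $[A_{AH},A_H]=\tfrac12\Delta(A)$), the induction closes cleanly and yields the stated expansion.
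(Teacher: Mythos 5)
Your proof is correct and follows exactly the route the paper intends: the paper omits the details, stating only that the identity follows by induction from the relation $A_{AH}\,A_H-A_H\,A_{AH}=\tfrac{1}{2}\Delta(A)$, and your induction (multiply on the right by $A_H$, commute once more, absorb the extra $\tfrac12 A_H^{j}\Delta(A)$ as the $k=j$ term) is the standard way to carry this out. The bookkeeping in your inductive step checks out, so nothing further is needed.
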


If the procedure block-Lanczos  applied to $A_H$ with initial matrix 
 $Z\in \mathbb C^{n\times \ell}$, $\ell\leq 6$ such that $\span{\{Z\}}=\mathcal D_s$ 
terminates without premature stop 
then at the very end the unitary matrix $U$ transforms  $A_H$ into the Hermitian block 
tridiagonal  matrix $T=U^H \cdot A_H \cdot U$ with blocks of size at most 6. 
The following result says that  $H\colon =U^H \cdot A_{AH} \cdot U$ is  also 
block-tridiagonal with blocks of size at most $6$. 

\begin{thm}\label{main:kg}
Let $A\in \mathbb C^{n\times n}$ be as
 in \eqref{normal}, \eqref{normal2} and\eqref{hp}.  Then we have 
 ${\mc K}_{\ j}(A_{AH}, Z)\subseteq{\mc K}_{\ j}(A_H, Z)$ for each $j\geq 0$, whenever 
$\span{\{Z\}}=\mathcal D_s=\span\{\mb u, \mb v, 
A^H\mb u, A^H\mb v, A\mb u, A\mb v\}.$
Hence, if the block Lanczos process does not break down, 
the unitary matrix which transform $A_H$ to block-tridiagonal form brings also $A_{AH}$ 
to block-tridiagonal form, and hence also $A$ is brought to block tridiagonal form with 
blocks of size at most 6. 
\end{thm}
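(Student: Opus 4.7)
My plan is to mimic the strategy of Theorems~\ref{main:uv} and~\ref{main:k}, reducing the whole statement to a key one-step lemma: for every $\mathbf z\in\mathcal D_s$ the vector $A_{AH}\mathbf z$ lies in ${\mc K}_1(A_H,Z)$. Granted this lemma, the inclusion ${\mc K}_j(A_{AH},Z)\subseteq{\mc K}_j(A_H,Z)$ will follow by induction on $j$ using Lemma~\ref{lem31}. A typical generator $A_{AH}^{j+1}\mathbf z$ can be written as $A_{AH}(A_{AH}^j\mathbf z)$, and by the inductive hypothesis $A_{AH}^j\mathbf z$ is a combination of vectors $A_H^k\mathbf w$ with $\mathbf w\in\mathcal D_s$ and $0\le k\le j$. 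Lemma~\ref{lem31} rewrites each $A_{AH}A_H^k\mathbf w$ as $A_H^kA_{AH}\mathbf w+\tfrac12\sum_\ell A_H^\ell\Delta(A)A_H^{k-1-\ell}\mathbf w$; the correction terms sit in ${\mc K}_\ell(A_H,Z)$ because the range of $\Delta(A)$ is contained in $\mathcal D\subseteq\mathcal D_s$, while the main term $A_H^kA_{AH}\mathbf w$ sits in $A_H^k\,{\mc K}_1(A_H,Z)\subseteq{\mc K}_{k+1}(A_H,Z)\subseteq{\mc K}_{j+1}(A_H,Z)$ by the key lemma.

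For the key lemma, the cases $\mathbf z\in\{\mathbf u,\mathbf v\}$ are immediate since $A_{AH}\mathbf z\in\mathcal D_s\subseteq{\mc K}_1(A_H,Z)$. The four remaining cases $\mathbf z\in\{A\mathbf u,A^H\mathbf u,A\mathbf v,A^H\mathbf v\}$ are where the polyanalytic relation \eqref{normal2} and Hypothesis~1 enter. Translating the scalar identity \eqref{one} into a matrix identity by the substitution $z\mapsto N$, $\bar z\mapsto N^H$, and using $p(N)=0$ to eliminate the quadratic right-hand side, one obtains
\begin{equation*}
N_{AH}N=-\frac{\beta}{\alpha}\,N_H N-\frac{\gamma}{\alpha}\,N_H N^H-\frac{1}{2\alpha}\bigl(a_{1,0}N+a_{0,1}N^H+a_{0,0}I\bigr),
\end{equation*}
with $\alpha=(a_{2,0}+a_{0,2}-a_{1,1})/2\neq 0$ by Hypothesis~1. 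Applying this identity to $\mathbf u$, and observing that (i) $N\mathbf u$ and $N^H\mathbf u$ lie in $\mathcal D_s$ and (ii) the rank-one difference $N_H-A_H=-(\mathbf u\mathbf v^H+\mathbf v\mathbf u^H)/2$ has range in $\span\{\mathbf u,\mathbf v\}\subseteq\mathcal D_s$, each term on the right lies in $A_H\mathcal D_s+\mathcal D_s\subseteq{\mc K}_1(A_H,Z)$, hence $N_{AH}N\mathbf u\in{\mc K}_1(A_H,Z)$. Re-absorbing the analogous rank-one gap between $A_{AH}$ and $N_{AH}$ then gives $A_{AH}A\mathbf u=A_{AH}N\mathbf u+(\mathbf v^H\mathbf u)A_{AH}\mathbf u\in{\mc K}_1(A_H,Z)$. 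The case $A^H\mathbf u$ is handled identically through the companion identity \eqref{two}, whose leading scalar $\alpha'=-\alpha$ is again nonzero by Hypothesis~1, and the two $\mathbf v$-cases are obtained by applying the same matrix identities with $\mathbf v$ in place of $\mathbf u$.

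The main obstacle is precisely the key lemma: all six generators of $\mathcal D_s$ must be handled simultaneously, and the rank-one corrections linking $A$ to the normal matrix $N$ must be tracked carefully throughout. Hypothesis~1 is indispensable here because it is exactly the condition that keeps $\alpha$ (equivalently $\alpha'$) nonzero and so lets $N_{AH}N$ and $N_{AH}N^H$ be expressed through products of the form $N_H N$, $N_H N^H$ modulo low-order terms; without this nondegeneracy the polyanalytic relation alone would not be strong enough to force the desired Krylov inclusion, consistent with the paper's closing remark about the consequences of a ``missing'' rank-low matrix $C$.
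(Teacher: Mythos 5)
Your proof is correct and follows essentially the same route as the paper's: the base case handles the six generators of $\mathcal D_s$ via the matrix identities derived from \eqref{one} and \eqref{two} together with $p(N)=0$ (Hypothesis~1 guaranteeing $\alpha\neq 0$), and the inductive step is carried by Lemma~\ref{lem31} with the $\Delta(A)$ correction terms absorbed because $\mathcal S\subseteq\mathcal D\subseteq\mathcal D_s$. The only slip is cosmetic: the differences $A_H-N_H=(\mb u\mb v^H+\mb v\mb u^H)/2$ and $A_{AH}-N_{AH}=(\mb u\mb v^H-\mb v\mb u^H)/2$ are generically of rank two rather than rank one, but since their ranges lie in $\span\{\mb u,\mb v\}\subseteq\mathcal D_s$ your argument goes through unchanged.
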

\begin{proof}
Let  $U(:, 1:i_1)$ be the first block of columns of $U$ spanning the subspace $\mathcal D_s$.
The proof follows by induction on $j$.  Consider  the initial step $j=1$. 
We have 
\[
A_{AH} \mb u=-A_H \mb u +A\mb u \in {\mc K}_{\ 1}(A_H, U(:, 1:i_1) 
\]
and  a similar relation holds for $A_{AH}\mb v$. 
Concerning $A_{AH} A \mb u$  from \eqref{one} we obtain that 
\[
\alpha(N- N^H)N  + \beta(N+ N^H) N  +\gamma (N+ N^H) N^H=a_{2,0}N^2  +a_{1,1}N N^H + a_{0,2}{N^H}^2, 
\]
and, hence, by using \eqref{normal2} we find  that 
\begin{equation}\label{NN}
-\alpha(N- N^H)N=\beta(N+ N^H) N  +\gamma (N+ N^H) N^H +(a_{1,0}N + a_{0,1}N^H+ a_{0,0}I).
\end{equation}
By  plugging  $N=A-\mb u\mb v^H$   
into~\eqref{NN} we conclude that 
$$
\frac{(A-A^H)}{2}A{\mb u}\in {\mc K}_{\ 1}(A_H, U(:, 1:i_1)).
$$
We can proceed similarly to establish the same property for 
the remaining vectors $A_{AH} A \mb v$ and 
$A_{AH} A^H \mb u$,  $A_{AH} A^H \mb v$  by using \eqref{two}.

To complete the proof, assume  that
$$
A_{AH}^{j}\, U(:, 1:i_1)\in{\mc K}_{\ j}(A_H, U(:, 1:i_1)),
$$
and prove the same relation for $j+1$. 
We have
$$
A_{AH}^{j+1}\,U(:, 1:i_1)=A_{AH}\,(A_{AH}^jU(:, 1:i_1)=A_{AH}X.
$$
By induction $X$ belongs to ${\mc K}_{\ j}(A_H, U(:, 1:i_1))$ and, therefore,   
the  thesis is proven by applying Lemma~\ref{lem31}.\end{proof}

Note that when we have a coefficient $a_{ij}=0$ we may need less vectors in the approximating initial subspace.
For example in the case $N$ is unitary, the polynomial becomes
$$
p(z)=z\bar z-1,
$$
meaning $a_{2,0}=a_{0,2}=a_{1,0}=a_{0,1}=0$, $a_{1,1}=1$ and $a_{0,0}=-1$. From~\eqref{one} 
we have $\alpha=-1/2$, $\beta=1/2$ and $\gamma=0$, and hence we can see that everything works 
starting from the vectors $[\mb u, \mb v, A\mb u, A\mb v]$ 
independently of the invertibility of $A$ as required in the previous section to establish the existence 
of a suitable matrix $C$. 

In general, however, four initial vectors are not sufficient  to start with the block tridiagonal
 reduction supporting 
the claim that  for the given $A$ there exist no matrix $C$ of rank two  satisfying $\Delta(A)=CA-AC$. 
However, due to the relations induced by the minimal polyanalytic polynomial of degree two  it is seen that the 
construction immediately  shrinks to size 4 after the  first step. 
In figure \ref{f5} we show the  shape of the matrix generated from the block-Lanczos procedure applied for the  
block tridiagonalization of  a normal-plus-rank--one matrix where the normal component has eigenvalues located on
some arc of parabola in the complex plane.  
\begin{figure}
\begin{center}
\includegraphics[scale=0.50]{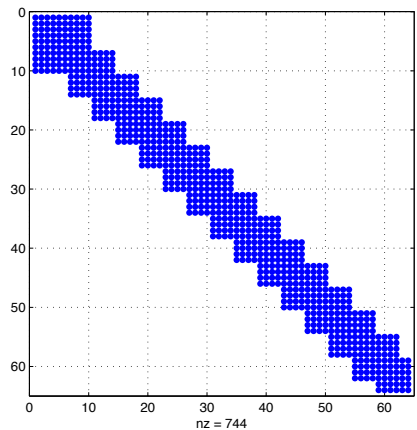}
\caption{Shape of the block tridiagonal matrix obtained from the block-Lanczos 
procedure applied to a  rank--one correction of a normal matrix whose eigenvalues lie on  some arc of parabola}
\label{f5}
\end{center}
\end{figure}

\section{Conclusions}
In this paper we have addressed the problem of computing a block tridiagonal  matrix 
unitarily similar to a given 
almost normal or perturbed normal matrix.   A computationally appealing procedure 
relying upon  the block Lanczos method is proposed for this task.   The application of the banded 
reduction 
for  the acceleration of rank-structured matrix computations is an ongoing research topic.

%
\def\cprime{$'$} \def\cprime{$'$}

\end{document}